\definecolor{darkblue}{RGB}{0,0,160}
\newcommand{\excise}[1]{}%{$\star$\textsc{#1}$\star$}
\newtheorem{thm}{Theorem}[section]
\newtheorem{lemma}[thm]{Lemma}
\newtheorem{prop}[thm]{Proposition}
\newtheorem{conj}[thm]{Conjecture}
\newtheorem{question}[thm]{Question}
\theoremstyle{definition}
\newtheorem{example}[thm]{Example}
\newtheorem{remark}[thm]{Remark}
\newtheorem{defn}[thm]{Definition}
\numberwithin{equation}{section}
\newcommand{\ring}[1]{\ensuremath{\mathbb{#1}}}
\newcommand\NN{\ring{N}}
\newcommand\RR{\ring{R}}
\newcommand\ZZ{\ring{Z}}
\newcommand\xx{{\mathbf x}}
\newcommand\ind{\mathbbm{1}}
\DeclareMathOperator{\supp}{supp} % The support
\DeclareMathOperator{\diag}{diag} % The diagnol matrix
\DeclareMathOperator{\conv}{conv} % convex hull
\newcommand{\yalmip}{\textsc{yalmip}\xspace}
\newcommand{\matlab}{\textsc{Matlab}\xspace}
\newcommand{\mosek}{\textsc{mosek}\xspace}
\begin{document}

\def\urladdrname{{\itshape Website}}
\title{Algebraic geometry of Poisson regression}

\author{Thomas Kahle}
\address{Otto-von-Guericke Universität\\ Magdeburg, Germany} 
\urladdr{\url{http://www.thomas-kahle.de}}

\author{Kai-Friederike Oelbermann}
\address{Otto-von-Guericke Universität\\ Magdeburg, Germany} 
 \urladdr{\url{http://www.kai-friederike.de}}

\author{Rainer Schwabe}
\address{Otto-von-Guericke Universität\\ Magdeburg, Germany} 
\email{rainer.schwabe@ovgu.de}

\thanks{TK is supported by the Research Focus Dynamical Systems of the
state Saxony-Anhalt.}

\date{\today}

\makeatletter
  \@namedef{subjclassname@2010}{\textup{2010} Mathematics Subject Classification}
\makeatother

\subjclass[2010]{Primary: 62K05 Secondary: 13P25, 14P10, 62J02}
% Primary: 
% 62K05 Optimal Design

% Secondary:
% 13P25 Applications of commutative Algebra
% 14P10 Semialgebraic sets and related spaces
% 62J02 General nonlinear regression

% \keywords{algebraic statistics, optimal experimental design, Poisson
% regression, semi-algebraic sets, spectrahedra}

\begin{abstract}
Designing experiments for generalized linear models is difficult
because optimal designs depend on unknown parameters.  Here we
investigate local optimality.  We propose to study for a given design
its region of optimality in parameter space.  Often these regions are
semi-algebraic and feature interesting symmetries.  We demonstrate
this with the Rasch Poisson counts model.  For any given interaction
order between the explanatory variables we give a characterization of
the regions of optimality of a special saturated design. This extends
known results from the case of no interaction.  We also give an
algebraic and geometric perspective on optimality of experimental
designs for the Rasch Poisson counts model using polyhedral and
spectrahedral geometry.
\end{abstract}

\maketitle
\setcounter{tocdepth}{1}
\tableofcontents

\section{Introduction}
\label{sec:introduction-1}

Generalized linear models are a mainstay of statistics, but optimal
experimental designs for them are hard to find, as they depend on the
unknown parameters of the model.  A common approach to this problem is
to study local optimality, that is, determine an optimal design per
fixed set of parameters.  In practice, this means that appropriate
parameters have to be guessed a priori, or fixed by other means.  Here
we take a global view on the situation.  Our goal is to partition
parameter space into \emph{regions of optimality}, such that in each
region the optimal design is (at least structurally) constant.  Our
key observation is that, by means of general equivalence theorems, the
regions of optimality are often \emph{semi-algebraic}, that is,
defined by polynomial inequalities.  This opens up the powerful
toolbox of real algebraic geometry to the analysis of optimality of
experimental designs.

We discuss the phenomenon on the Rasch Poisson counts model, a certain
generalized linear model that appears in Poisson regression, for
example in tests of mental speed in
psychometry~\cite{doebler2015processing}.  The parameterization of the
intensities of the Poisson distribution is akin to the toric models in
algebraic statistics.  The view from experimental design, however, is
new, and the resulting mathematical questions have not been considered
in algebraic statistics.  Our main result is a characterization of
optimality of a particular saturated design in
Theorem~\ref{t:cornerOpt}.  Beyond that, we also demonstrate how to
approach the problem from a geometric point of view.  In particular,
in Section~\ref{s:algGeo} we describe the problem of determining
regions of optimality in the language of mathematical optimization.
We are convinced that interesting mathematical structures can be found
when studying the polynomial inequalities that arise from the
different equivalence theorems in the theory of optimal experimental
design.

\subsection*{Acknowledgement}
TK is supported by the Research Focus Dynamical Systems (CDS) of the
state Saxony-Anhalt.  We thank Bernd Sturmfels for sharing his
geometric view on the problem, leading to some of the considerations
in Section~\ref{s:algGeo}.

\subsection*{Notation}
We switch freely between a binary vector $\xx = (x_i) \in\{0,1\}^k$
and a subset $A\subset \{1,\dots,k\}$.  If confusion can arise, the
subset corresponding to $\xx\in\{0,1\}^k$ is
written~$A(\xx)=\{i : x_i=1\}$, and conversely, the binary vector for
a given~$A$ is~$\xx(A)$ with components $x_i(A)=1$ if $i\in A$ and
$x_i(A) = 0$ otherwise.

\section{The Rasch Poisson counts model}
When testing mental speed, psychometrists often present series of
questions and count the number $Y$ of correctly solved items in a
fixed time.  One example of such a test is the Münster Mental Speed
Test~\cite{doebler2015processing}.  In such a setting it is natural to
model the \emph{response}~$Y$ as Poisson distributed with
parameter~$\lambda > 0$, often called the \emph{intensity}.  According
to the basic principle of statistical regression, the mean of the
response~$Y$ (which is just $\lambda$) is a deterministic function of
the factors of influence.  Rasch's idea was to make
$\lambda = \theta\sigma$ multiplicative in the ability $\theta$ of a
test person and the easiness $\sigma$ of the tasks.  Due to the
multiplicative structure, an absolute estimation of either ability or
easiness is only possible if the other quantity is fixed.  For the
mathematics, the distinction between $\theta$ and $\sigma$ is not
relevant, because we make another multiplicative ansatz for $\sigma$
below, and $\theta$ may well be subsumed there.

\emph{Rule based item generation} is a computer driven mechanism to
generate questions to present to the subjects.  One question's
easiness $\sigma(\xx)$ depends on a rule setting~$\xx$.  We think of
the \emph{rules} as discrete switches that can be on or off and that
influence the difficulty of the question.  In practice, we often
assume that each additional rule makes the task harder and thus
decreases the intensity.
Throughout the paper, the number of rules is fixed as $k\in\NN$.  The
possible experimental settings are thus the binary vectors
$\xx=(x_1,\ldots,x_k)\in \{0,1\}^k$ (but see our Notation section).

The natural choice for the influence of rule settings on the intensity
$\lambda$ is exponential:
\begin{equation}\label{e:fAnsatz}
\lambda (\xx) = \theta\sigma (\xx) = \exp (f(\xx)^T\beta)
\end{equation}
for a vector of \emph{regression functions} $f : \{0,1\}^k \to \RR^p$,
and a vector of parameters~$\beta\in\RR^p$.  A concrete \emph{model}
is specified by means of the integers $k,p$, and the regression
functions~$f$.

\begin{defn}\label{d:interaction}
The \emph{interaction model of order $d$} is specified by the
regression function
\begin{equation}\label{eq:regression}
f_d (\xx) = (\text{all squarefree monomials of degree at most $d$ in $x_1,\dots,x_k$})
\end{equation}
\end{defn}

\begin{remark}\label{r:squarefree}
If our rule settings $\xx$ were not binary, then in
Definition~\ref{d:interaction} there would be a difference between
using all monomials and all squarefree monomials.  For binary $\xx$
there is none since $x_i^2=x_i$ for all~$i$.
\end{remark}

\begin{example}\label{e:regression}
The most interesting model from a practical perspective is the
\emph{independence model} which arises for $d=1$.  In this case
$f(\xx) = (1,x_1,\dots,x_k)$ and $p=1+k$.  The \emph{pairwise
interaction model} arises for $d=2$, where
$f(\xx) = (1,x_1,\dots,x_k,x_1x_2,\dots,x_{k-1}x_k)$ and
$p = 1+k+\binom{k}{2}$.  Somewhat confusingly this second situation is
sometimes called \emph{first order interaction}.
\end{example}

Definitions \eqref{e:fAnsatz} and \eqref{eq:regression} lead to a
product structure for the intensity~$\lambda(\xx)$ as follows.  Let
$d\ge 1$.  There is a parameter $\beta_A$ for each
$A\subset \{1,\dots,k\}$ with $|A|\le d$.  Then
\begin{equation} \label{eq:productStructure} \lambda(\xx,\beta) =
\prod_{\substack{A\subset A(\xx)\\|A|\le d}} e^{\beta_A}.
\end{equation}
Hence, the more rules are applied, the more terms $e^{\beta_A}$ enter
the product~\eqref{eq:productStructure}.  In the $d=1$ case, there is
one term $e^{\beta_{\{i\}}}$ for each $i\in\{1,\dots,k\}$ and one
global term~$\beta_\emptyset$.  The intensity is then proportional to
the product over those terms for which the corresponding rule is
active and there is no interaction among the rules.  For higher
interaction order $d$, if, for example, rules $1,2$ are active, the
corresponding factor is
$e^{\beta_{\{1\}}}e^{\beta_{\{2\}}}e^{\beta_{\{1,2\}}}$, etc.  If all
singleton parameters $\beta_{\{i\}}$ have the same sign, then having a
parameter $\beta_A, |A|\ge 2$ with the same sign is sometimes called
\emph{synergetic interaction}, while $\beta_A$ with a different sign
is called \emph{antagonistic interaction}.

The case $d=1$ is particularly well-behaved (and very relevant for the
practitioners).  Graßhoff, Holling, and Schwabe have investigated this
case in depth in
\cite{grasshoff2013optimal,grasshoff2014optimal,grasshoff2015stochastic}.
In Section~\ref{sec:saturated-designs} we generalize some of their
results to the general interaction case.

\begin{remark}\label{r:simplicialComplex}
In \eqref{eq:regression} we chose all squarefree monomials of bounded
degree.  Therefore, if there is a parameter $\beta_A$ for some set $A$
of rules, then there also are parameters $\beta_B$ for all subsets $B$
of~$A$.  In the language of combinatorics, the indices of the
parameters form a \emph{simplicial complex}, and one could conversely
define a model for each simplicial complex, by letting the regression
function consist of squarefree monomials corresponding to the faces of
the complex.  This puts our parametrizations of possible intensities
$\lambda$ in the context of \emph{hierarchical log-linear
models}~\cite[Section~1.2]{drton09:_lectur_algeb_statis}, certain
hierarchically structured exponential families that also arise in the
theory of information processing systems~\cite{KahOlbJosAy08}.
\end{remark}

\begin{remark}\label{r:MarkovBases}
In \eqref{e:fAnsatz}, not all vectors $\lambda \in \RR^{2^k}$ have
corresponding parameters~$\beta$.  Obviously, $\lambda$ needs to have
positive entries, but there are further restrictions.  For the
simplest example, in the case $k=2, d=1$, there are four possible rule
settings $\{(00), (01), (10), (11)\}$.  Independent of the parameters
$\beta$, it holds that
\[
\lambda(00)\lambda(11) - \lambda(10)\lambda(01) = 0,
\]
since both terms equal $e^{2\beta_\emptyset}e^{\beta_1}e^{\beta_2}$.
As a function $\RR^4 \to \RR$, this $2\times 2$ determinant vanishes
identically on the image of the parametrization and it can be seen
that this vanishing characterizes points in the image.  For any $k$
and $d$, there is a finite set of binomials (that is, polynomials with
only two monomials) in $\lambda$ that characterizes the image of the
parameterizations.  In commutative algebra these are known as the
generators of certain toric
ideals~\cite[Chapter~4]{sturmfels96:_gr_obner_bases_and_convex_polyt},
while in algebraic statistics they are called Markov
bases~\cite{diaconissturmfels98}.  In principle, after fixing $k$
and~$d$, all binomials can be computed with the help of computer
algebra (the fastest software is \text{4ti2}~\cite{4ti2}), but this is
hard already for $d=2$ and $k>7$.  Many special cases have been dealt
with in algebraic statistics, though.  See \cite{aoki2012markov} and
references therein.
\end{remark}

\subsection{Optimal experimental design}
The estimation problem is to determine the values of the parameters
$\beta$ given observations $(Y^{(i)},\xx^{(i)})$, $i=1,\dots, N$ which
are pairs of experimental settings $\xx^{(i)}$ and
responses~$Y^{(i)}$.  As $Y$ is a random variable, the estimator is a
random variable too.  In practice, when designing an experiment to
determine~$\beta$, we can choose which settings $\xx^{(i)}$ to
present.  This choice should be made so that the result of the
experiment is most informative about~$\beta$.  Doing so, we may also
choose to test a particular setting $\xx$ multiple times.  This
quickly leads to an idea of Kiefer: An \emph{approximate design} is a
vector $(w_\xx)_{\xx\in\{0,1\}^k} \in [0,1]^{2^k}$ of non-negative
weights with $\sum_\xx w_\xx = 1$.  In the following we only work with
approximate designs as our choices of experimental settings.

How is the quality of a design to be measured?  Quite generally, one
uses the Fisher information matrix, defined as
\begin{equation}\label{eq:FischerInformation}
M (w,\beta) =\sum_{\xx \in \{0,1\}^k} w_\xx \lambda(\xx, \beta)
f(\xx) f(\xx)^T.
\end{equation}
This choice can be motivated by large sample asymptotics:
asymptotically the maximum-likelihood-estimator of the parameters is
normal and its standardized covariance matrix is the inverse of the
Fisher information.  An \emph{optimality criterion} is any function
that produces a real number from the Fisher information.  Here we
choose the popular $D$-optimality criterion which uses the
determinant.  The design problem for the Poisson counts model is to
determine descriptions of the regions in $\beta$-space where certain
designs are optimal.  Given a particular design, however, there may be
no parameters $\beta$ for which this design is optimal.

\begin{remark}\label{r:betaempty}
When the global parameter $\beta_\emptyset$ changes, the determinant
of $M(w,\beta)$ is globally scaled.  For all question regarding
optimal design we may therefore assume $\beta_\emptyset = 0$.
\end{remark}

\begin{example}\label{e:betaZero}
If $\beta_A = 0$ for all $A$, one can check that the situation reduces
to that of a linear model.  A $D$-optimal experimental design is given
by the uniform weight vector $w_\xx = \frac{1}{2^k}$, for
all~$\xx\in\{0,1\}^k$.
\end{example}

\subsection{Symmetry}
\label{s:symmetry}
The regions of optimality show a high degree of symmetry.  We use only
basic facts about symmetric designs.  Corresponding statements can be
made in more general settings~\cite{radloff15:_invar}.  Let $G$ be a
finite group acting on the set of design points~$\{0,1\}^k$.  Two
natural symmetries result from $G = S_k$, the symmetric group
permuting rules, and $G=\ZZ_2^k$ whose elements exchange the roles of
$0$ and~$1$ for some rules.  The action $\circ$ of $G$ on approximate
designs is defined by $(g\circ w)_\xx = w_{g\circ \xx}$.  A crucial
assumption for the exploitation of symmetry in design theory is that
the action of $G$ induces a linear action on regression functions,
that is, for each $g\in G$ there is a matrix $Q_g$ such that
$f(g\circ\xx) = Q_gf(\xx)$.  It is not difficult to assert this
assumption in our case.  From this one can define a corresponding
action (also denoted $\circ$) on parameter space via the requirement
$f(g\circ \xx)^T(g\circ\beta) = f(\xx)^T\beta$ that the response be
invariant.  It is obvious that $g\circ \beta = Q_g^{-T}\beta$ is a
possible choice.  By linearity, and since the intensity
$\lambda(\xx,\beta)$ only depends on the response $f(\xx)^T\beta$,
information matrices transform as
\[
M(g\circ w, g\circ \beta) = Q_g M(w,\beta) Q_g^T.
\]
Since $G$ is finite, $Q_g$ is unimodular and the determinant is
unchanged.  This proves that any optimal design $w$ for parameters
$\beta$ yields the optimal design $g\circ w$ for parameters
$g\circ \beta$: if a better value was possible in the optimization
problem for parameters $g\circ \beta$, then a better value of the
determinant could also be achieved in the problem for~$\beta$.  In
total we have the following proposition.
\begin{prop}\label{p:symmetric}
The regions of optimality are symmetric in the sense that if $w$ is a
$D$-optimal approximate design for parameters $\beta$, then $g\circ w$
is a $D$-optimal approximate design for parameters $g\circ \beta$.
\end{prop}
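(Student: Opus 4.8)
The plan is to read off the statement directly from the transformation law $M(g\circ w, g\circ \beta) = Q_g M(w,\beta) Q_g^T$ derived above, simply by passing to determinants. First I would record that, since $Q_g$ is unimodular, $(\det Q_g)^2 = 1$, and hence
\[
\det M(g\circ w, g\circ \beta) = (\det Q_g)^2\, \det M(w,\beta) = \det M(w,\beta)
\]
for every approximate design $w$ and every $\beta$. In other words, the $D$-criterion is invariant under the simultaneous action $(w,\beta)\mapsto(g\circ w, g\circ\beta)$.

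Next I would verify the two structural facts that make the invariance usable. The map $w\mapsto g\circ w$ is a bijection of the set of approximate designs onto itself: by $(g\circ w)_\xx = w_{g\circ \xx}$ it only permutes the weights, so it preserves nonnegativity and the normalization $\sum_\xx w_\xx = 1$. Moreover $g\mapsto Q_g$ is a group homomorphism---from $f((gh)\circ\xx)=Q_gQ_hf(\xx)$ one reads off $Q_{gh}=Q_gQ_h$---so that $\circ$ is an honest action on parameter space and $g^{-1}\circ(g\circ\beta)=\beta$; in particular $w\mapsto g^{-1}\circ w$ inverts $w\mapsto g\circ w$.

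With these facts in place the conclusion follows by transport of optimality. Assume $w$ is $D$-optimal for $\beta$ and suppose, for contradiction, that $g\circ w$ were not $D$-optimal for $g\circ\beta$; then some design $v$ would satisfy $\det M(v, g\circ\beta) > \det M(g\circ w, g\circ\beta) = \det M(w,\beta)$. Applying the determinant invariance to $v$ with the group element $g^{-1}$ yields $\det M(g^{-1}\circ v,\beta) = \det M(v, g\circ\beta) > \det M(w,\beta)$, so $g^{-1}\circ v$ would beat $w$ in the problem for $\beta$---a contradiction. Hence $g\circ w$ is $D$-optimal for $g\circ\beta$.

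Since the analytic substance, namely the transformation law and the unimodularity of $Q_g$, is already available, I do not expect a serious obstacle. The only point needing care is confirming that $\circ$ is a genuine group action on both designs and parameters, which is exactly what lets the inverse element $g^{-1}$ carry a hypothetical improvement for $g\circ\beta$ back to an improvement for $\beta$.
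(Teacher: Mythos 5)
Your proposal is correct and follows essentially the same route as the paper: it rests on the transformation law $M(g\circ w, g\circ \beta) = Q_g M(w,\beta) Q_g^T$, unimodularity of $Q_g$, and the transport-of-optimality argument (a hypothetical improvement for $g\circ\beta$ would yield an improvement for $\beta$), which is exactly the paper's reasoning. Your explicit verification that $\circ$ is a genuine group action and that $w\mapsto g\circ w$ is a bijection merely fills in details the paper leaves implicit.
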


\begin{example}\label{e:signChange}
If $d=1$, and $G=\ZZ_2^k$ consists of $0/1$ exchanges, then it is easy
to check that the matrices $Q_g$ correspond to sign changes on the
parameters~$\beta$.  In particular, the regions of optimality are
point symmetric around the origin.
\end{example}

Another way to study the symmetry in this optimization problem is to
compute the determinant of the information matrix explicitly.  For
example, if $d=1$, exchanging $\beta_i$ by $-\beta_i$ replaces
$\lambda_i$ by $1/\lambda_i$ so that homogeneity of the determinant
can be exploited to see the symmetry.  For $d=1,k=2$, the determinant
is equal to the elementary symmetric polynomial of degree three in the
products~$w_{\xx}\lambda(\xx)$.  For $d=1$ and higher values $k$, the
determinant is not an elementary symmetric polynomial (it misses
monomials) but it still has a nice combinatorial description.  It is
an interesting exercise to work out the relation between the
determinant and the matrices $Q_g$ from above also for~$d>1$.

\section{Semi-algebraic regions of optimality for saturated designs}
\label{sec:saturated-designs}
The number of parameters of our interaction model of order $d$ equals
$p=\sum_{i=0}^d \binom{k}{i}$.  The Fisher information matrix
in~\eqref{eq:FischerInformation} is of format $p\times p$.
Carath\'eodory's theorem implies that every Fisher information matrix
is realized by a design $w$ which has at most $\frac{1}{2} p (p-1) +1$
support points.  Both the support points and the corresponding weights
are in general not unique, but in certain situations the optimal
experimental design is quite rigid.  A design is \emph{saturated} if
it is supported on exactly $p$ points.  It is clear
from~\eqref{eq:FischerInformation} that this is the minimal number of
points, since a convex combination of less than $p$ rank one matrices
has rank at most~$p-1$.  For saturated designs it is well-known that
$D$-optimal weights are uniform, that is, all weights $w_\xx$,
$\xx\in\supp (w_\xx)$ are equal to $1/p$ (see
\cite[Corollary~8.12]{pukelsheim1993optimal}).  Hence, optimization in
the class of saturated designs reduces to the choice of $p$
experimental settings~$\xx$ appearing in the support of~$w$.  We now
define a special design whose optimality we can characterize.  Its
support points correspond exactly to the terms in the regression
function.

\begin{defn}\label{d:corner}
The \emph{corner design $w_{k,d}^*$} is the saturated design with
equal weights $w_{\xx}=1/p$ for all $\xx \in \{0,1\}^k$ with
$|\xx|_1 \le d$.
\end{defn}

\begin{example}
For $k=3$ rules and interaction order $d=2$ the regression function is
$f(x_1,x_2,x_3)=(1,x_1,x_2,x_3,x_1x_2, x_1x_3, x_2x_3)$ and there are
$p=7$ parameters.
The corner design has weight $1/7$ on the seven binary 3-vectors not
equal to $(1,1,1)$:
\[
w_{3,2}^*: \quad w_{(0,0,0)}=w_{(1,0,0)}=w_{(0,1,0)}=w_{(0,0,1)}=
w_{(1,1,0)}=w_{(1,0,1)}=w_{(0,1,1)}=1/7.
\]
\end{example}

We introduce the shorthand notation $\mu_A := e^{\beta_A}$ and
$\mu_{i} = \mu_{\{i\}}$, $\mu_{ij} = \mu_{\{i,j\}}$, etc.  The region
of optimality of the corner design is described in the following
theorem.  Its proof is a translation of the inequalities in the
Kiefer-Wolfowitz equivalence
theorem~\cite[Section~9.4]{pukelsheim1993optimal} and follows later in
Section~\ref{sec:proof-theorem} after some discussion of consequences
and relations to existing work.
\begin{thm}{\label{t:cornerOpt}}
The corner design $w^*_{k,d}$ is optimal if and only if for all
$C\subseteq \{1,\dots,k\}$ with $|C|>d$
\begin{align}\label{eq:cornerOpt}
  \sum_{\substack{B\subset C \\ |B| \le d}} \binom{|C| - |B| -
  1}{d - |B|}^2 
  \prod_{\substack{A\subset C, |A| \le d \\A\neq B}}
  \mu_A \leq 1.
\end{align}
\end{thm}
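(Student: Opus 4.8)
The plan is to invoke the Kiefer-Wolfowitz equivalence theorem, which characterizes $D$-optimality of a design $w$ by the condition that the \emph{sensitivity function} $\phi(\xx) = \lambda(\xx,\beta)\, f(\xx)^T M(w,\beta)^{-1} f(\xx)$ satisfies $\phi(\xx) \le p$ for all design points $\xx \in \{0,1\}^k$, with equality on the support of $w$ (see \cite[Section~9.4]{pukelsheim1993optimal}). Since the corner design is saturated with uniform weights $1/p$ on the $p$ settings $\xx$ with $|\xx|_1 \le d$, equality on the support is automatic, so the content of the theorem is entirely in the inequalities $\phi(\xx) \le p$ for the remaining points, namely those $\xx$ with $|\xx|_1 > d$. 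The main task is therefore to compute $\phi(\xx)$ explicitly for such $\xx$ and show it reduces to the displayed inequality \eqref{eq:cornerOpt}.

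First I would set up the Fisher information matrix of the corner design. Because the support points are exactly the squarefree monomials of degree $\le d$, the matrix $F$ whose rows are $f(\xx)^T$ over the support is square of size $p \times p$, and is in fact the incidence matrix of the face lattice of the $(d)$-skeleton of the simplex on $\{1,\dots,k\}$ — an upper-unitriangular $0/1$ matrix once rows and columns are ordered by inclusion. Writing $D = \diag(\lambda(\xx,\beta)/p)$ over the support, we have $M(w,\beta) = F^T D F$, so $M^{-1} = F^{-1} D^{-1} F^{-T}$. The key simplification is that $F$ is unimodular with an explicitly invertible structure: its inverse is given by a Möbius-type alternating-sum formula on the Boolean lattice. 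Then for a point $\xx$ with $A(\xx) = C$, $|C| > d$, the vector $F^{-T} f(\xx)$ has entries indexed by subsets $B \subseteq C$ with $|B| \le d$, and these entries are (up to sign) binomial coefficients of the form $\binom{|C|-|B|-1}{d-|B|}$, arising from the inclusion-exclusion count of intermediate faces.

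Next I would assemble $\phi(\xx) = \lambda(\xx,\beta)\, (F^{-T} f(\xx))^T (pD^{-1}) (F^{-T} f(\xx))$. Writing everything in the $\mu_A = e^{\beta_A}$ notation and using the product structure \eqref{eq:productStructure} for the intensities, the diagonal factor $\lambda(\xx,\beta)/\lambda(\xx_B,\beta)$ — where $\xx_B$ is the support point indexed by $B$ — telescopes into the product $\prod_{A \subset C,\,|A|\le d,\,A\neq B}\mu_A$ appearing in \eqref{eq:cornerOpt}, since the intensity at $\xx$ collects \emph{all} factors $\mu_A$ with $A \subseteq C$, $|A|\le d$, while the intensity at $\xx_B$ collects only those $A \subseteq B$. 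After the global factor of $p$ cancels against the $1/p$ weights, the condition $\phi(\xx)\le p$ becomes exactly $\sum_{B} \binom{|C|-|B|-1}{d-|B|}^2 \prod_{A}\mu_A \le 1$. The last check is that a design optimal over the support points is optimal over all of $[0,1]^{2^k}$, which is precisely what the equivalence theorem guarantees.

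The hard part will be the exact evaluation of the entries of $F^{-T} f(\xx)$ and the verification that they equal the claimed binomial coefficients $\binom{|C|-|B|-1}{d-|B|}$. This is a combinatorial identity on the truncated Boolean lattice: one must count, with Möbius signs, the faces of dimension $\le d$ lying between $B$ and $C$, and show the alternating sum collapses to a single binomial coefficient. I expect this to follow from a standard summation identity (a consequence of the Vandermonde or hockey-stick identity applied to $\sum_j (-1)^j \binom{|C|-|B|}{j}\binom{\cdots}{\cdots}$), but pinning down the precise truncation bookkeeping — ensuring the degree constraint $|A| \le d$ interacts correctly with the Möbius inversion — is where the genuine calculation lies, and it is the step most likely to require care.
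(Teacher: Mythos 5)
Your plan follows the paper's proof almost step for step: the same reduction via the Kiefer--Wolfowitz theorem to the inequalities $\lambda(\xx)f(\xx)^T M^{-1} f(\xx)\le p$ off the support, the same model matrix $F$ with its M\"obius-type inverse (Lemma~\ref{l:invertF}), and the same binomial-coefficient evaluation of $F^{-T}f(\xx)$ (Lemma~\ref{l:multiply}; the identity you guess, $\sum_{j=0}^{K}(-1)^j\binom{n}{j}=(-1)^K\binom{n-1}{K}$, is exactly the one the paper invokes). Deferring that combinatorial lemma is fine in an outline. The genuine problem is the assembly step you treat as routine telescoping.

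The telescoping claim is false for $d\ge 2$. You correctly note that the intensity at the support point $\xx_B$ collects all factors $\mu_A$ with $A\subseteq B$, i.e.\ $\lambda(\xx_B)=\prod_{A\subseteq B,\,|A|\le d}\mu_A$. But then
\[
\frac{\lambda(\xx)}{\lambda(\xx_B)}
=\prod_{\substack{A\subseteq C,\ |A|\le d\\ A\not\subseteq B}}\mu_A ,
\]
which excludes \emph{every} subset of $B$, not only $A=B$ itself. This coincides with the product $\prod_{A\ne B}\mu_A$ in \eqref{eq:cornerOpt} only when all proper subsets $A\subsetneq B$ have $\mu_A=1$; that holds for $d=1$ (the only proper subset of a singleton is $\emptyset$, and $\mu_\emptyset=1$ after normalizing $\beta_\emptyset=0$), but fails for $d\ge 2$: with $d=2$, $C=\{1,2,3\}$, $B=\{1,2\}$, the ratio is $\mu_3\mu_{13}\mu_{23}$, whereas \eqref{eq:cornerOpt} has $\mu_1\mu_2\mu_3\mu_{13}\mu_{23}$. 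So the computation you outline, carried out honestly, yields a system of inequalities with $A\not\subseteq B$ in place of $A\ne B$, and the sentence claiming the condition ``becomes exactly'' \eqref{eq:cornerOpt} is a non sequitur. You have in fact inherited an ambiguity sitting in the paper's own proof: there $\Psi=\diag(1,(\mu_A)_{|A|\le d})$ is treated as the diagonal of intensities $\lambda(\xx_B)$ when equality on the support is verified, but as the diagonal of single parameters $\mu_B$ when the final display is written down, and these two readings are incompatible once $d\ge 2$. A blind proof must commit to one reading and justify it; the reading that actually makes $M=\frac{1}{p}F^T\Psi F$ (and hence the Kiefer--Wolfowitz translation) correct is the intensity one, and it leads to the $A\not\subseteq B$ inequalities rather than to \eqref{eq:cornerOpt} as printed. (A minor separate slip: with $D=\diag(\lambda(\xx_B)/p)$ one has $M^{-1}=F^{-1}D^{-1}F^{-T}$ already, so your extra factor in $pD^{-1}$ double-counts the weights, though that washes out of the final inequality.)
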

The inequalities in Theorem~\ref{t:cornerOpt} can always be satisfied.
Indeed by making parameters $\beta_A$ sufficiently negative, the left
hand side of \eqref{eq:cornerOpt} can be made as small as desired.
This has the interpretation that, if the rules make the problem hard
enough, not testing particularly hard settings becomes eventually
optimal.  Stated geometrically: The region of optimality of the corner
design is non-empty, independent of the interaction order and the
number of rules.

In the case $d=1$, Graßhoff et al. have shown that almost all of the
inequalities in Theorem~\ref{t:cornerOpt} are redundant.
Specifically, \cite[Theorem~1]{grasshoff2014optimal} shows that if
\[
\mu_{ij}+ \mu_i +\mu_j \leq 1
\]
for all pairs of $1\leq i < j \leq k$ then the corner design
$w_{k,1}^*$ is $D$-optimal.  In \eqref{eq:cornerOpt}, these
inqualities correspond to $|C|=2$.  The remaining inequalities are all
redundant and can be omitted.  This is not the case if $d>1$ as
illustrated by the following example.

\begin{example}\label{e:notRedundant}
Let $d=2$.  For $k=4$, fixing $\mu_\emptyset = 1$ with
Remark~\ref{r:betaempty}, the Rasch Poisson counts model has 10
remaining parameters $\mu_1,\dots,\mu_4,\mu_{12},\dots,\mu_{34}$.
Theorem~\ref{t:cornerOpt} stipulates five inequalities that
characterize optimality of the corner design.  Four of the
inequalities correspond to the four subsets of size three.  For
example, the inequality for $C = \{1,2,3\}$ has terms of degrees six
and five:
\begin{equation}\label{eq:exampleIneq1}
\mu_1\mu_2\mu_3\mu_{12}\mu_{13}\mu_{23} +
\mu_2\mu_3\mu_{12}\mu_{13}\mu_{23} + 
\dots + 
\mu_1\mu_2\mu_3\mu_{12}\mu_{13} \le 1.
\end{equation}
The inequality corresponding to $C = \{1,2,3,4\}$ has non-trivial
binomial coefficients and terms of degrees ten and nine:
\begin{equation}\label{eq:exampleIneq2}
9 \prod_{|A|\leq 2} \mu_A + 4 \sum_{|B|=1} \prod_{\substack{|A| \leq
2\\A\neq B}} \mu_A + \sum_{|B|=2} \prod_{\substack{|A| \leq
2\\A\neq B}} \mu_A \le 1.
\end{equation}
To confirm that the final inequality is not redundant, we are
searching for a point that satisfies all four inequalities
in~\eqref{eq:exampleIneq1}, but not that in~\eqref{eq:exampleIneq2}.
To reduce dimension, we restrict to parameter values invariant under
the symmetric group $S_k$ permuting rules.  For singletons $i$, let
$\mu_i=s$ and for pairs $\{i,j\}$, $i\neq j$, let $\mu_{ij}=t$.  In
this two-dimensional set of parameter values, the inequalities take
the form
\begin{equation*}
s^3t^3 + 3s^2t^3 + 3s^3t^2 \le 1,\qquad\quad
9s^4t^6 + 16s^3t^6 + 6s^4t^5 \le 1.
\end{equation*}
It is easy to verify that $s=5/9, t=4/5$ satisfies the first
inequality, but violates the second.  Figure~\ref{fig:stExample} is a
plot of the resulting inequalities for $k=10$.  The region of
optimality consists of all points that lie below any of the curves.
\begin{figure}[htb]
\centering
\newcommand{\shiftleft}[2]{\makebox[0pt][r]{\makebox[#1][l]{#2}}}
\includegraphics[width=.8\linewidth]{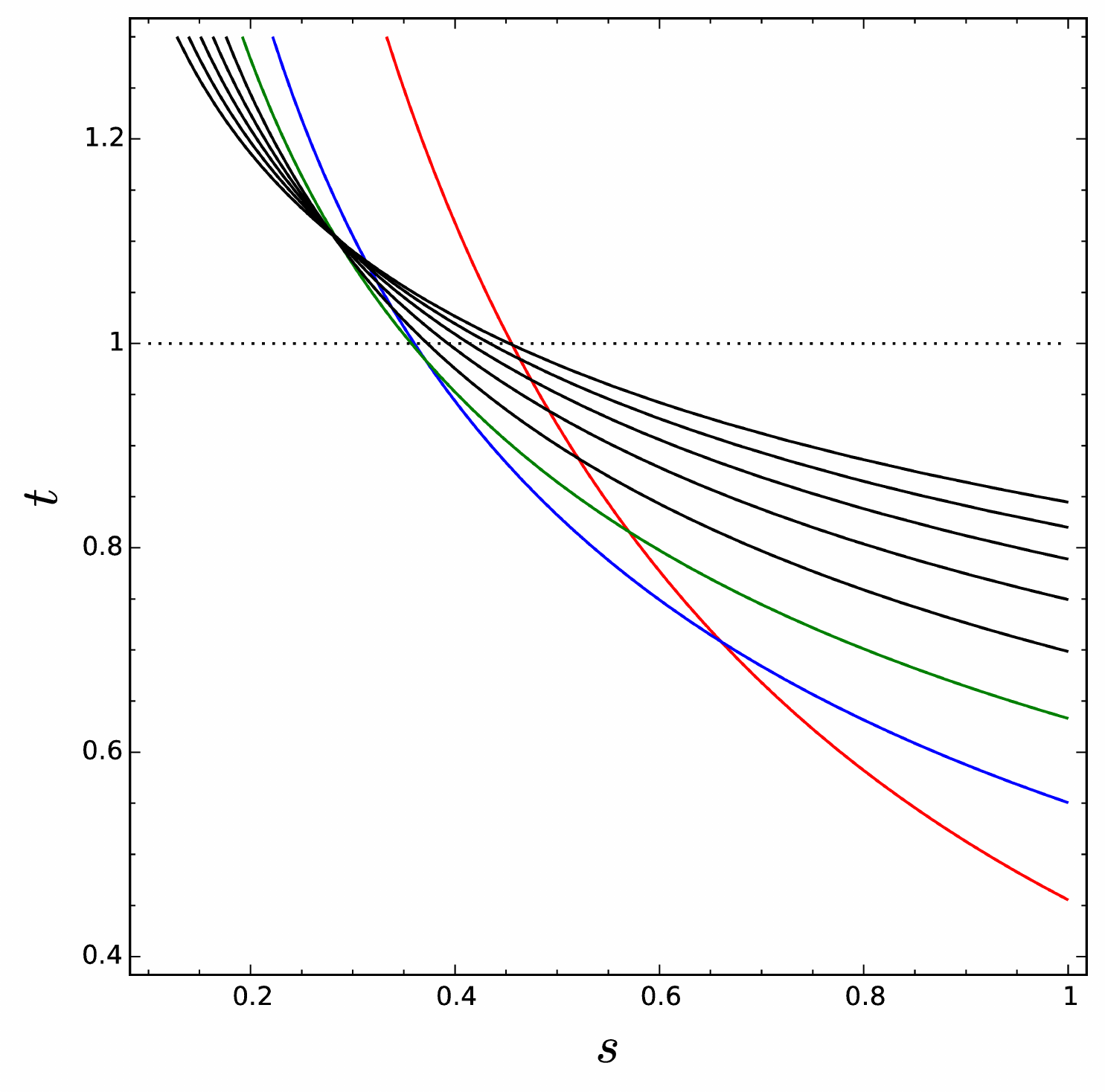}
\raisebox{1.7cm}{
  \shiftleft{.71\linewidth}{
    \includegraphics[height=4cm]{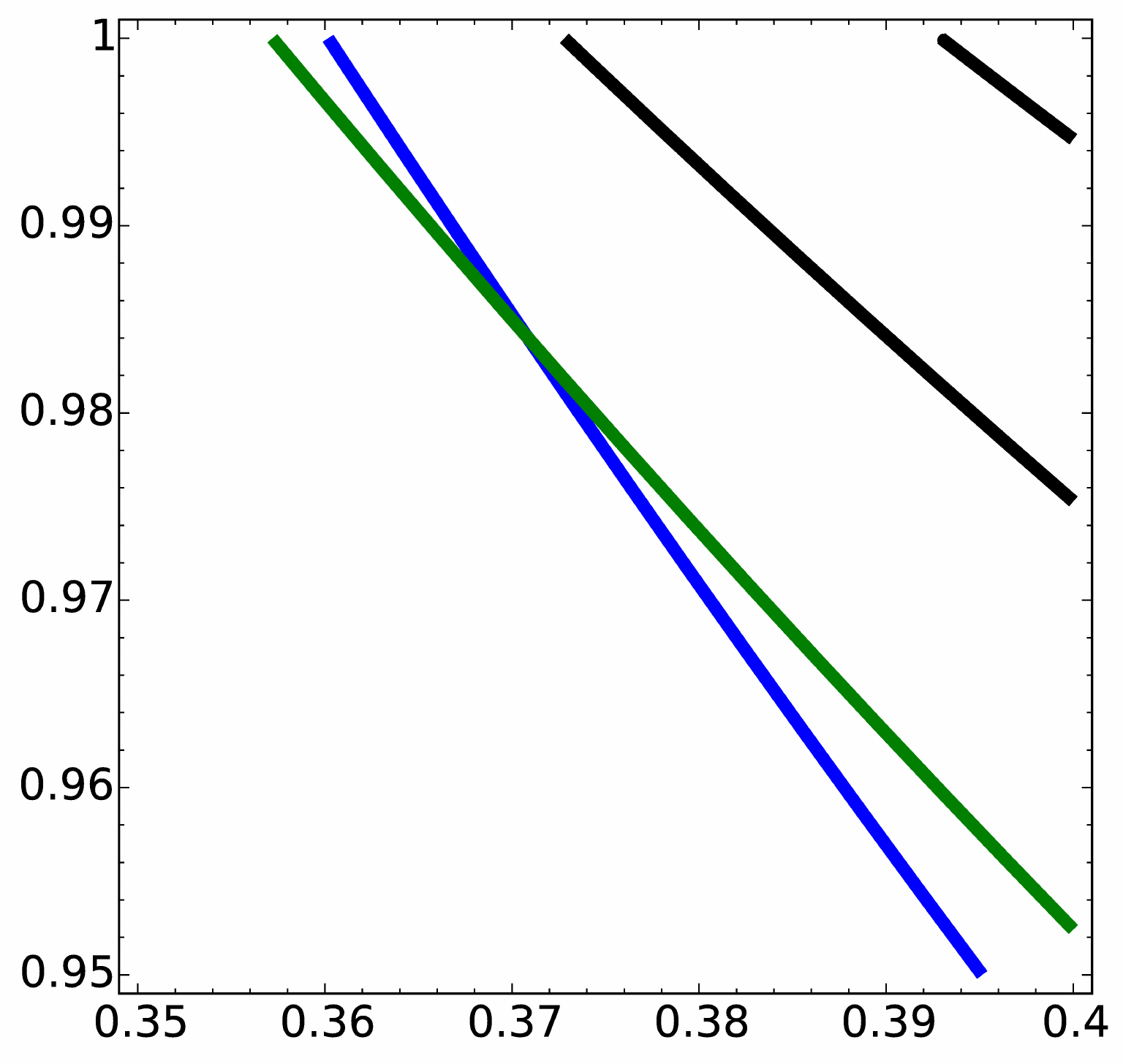}}}
    \caption{\label{fig:stExample} The curves in this plot consist of
    the points for which the inequalities \eqref{eq:cornerOpt} are
    attained.  The region of optimality of the corner design in
    Example~\ref{e:notRedundant} is the region below any of the
    curves.  On the right hand side of the picture, $|C|=3$ is the
    lowermost curve, then $|C|=4$, and so on.  Consequently, the red,
    blue, green curves correspond to $|C|=3,4,5$, respectively.  In
    the region where $s,t \le 1$ (that is, $\beta_A \le 0$),
    inequalities corresponding to $|C| > 5$ are redundant and plotted
    in black.  The inset shows a tiny region where the $|C| = 5$
    inequality is necessary.  The region above the dotted line
    corresponds to antagonistic interaction: Two rules being active at
    the same time make the problem easier.  In this case also
    inequalities for $|C| > 5$ are tight.}
\end{figure}
\end{example}
For fixed $d$, as $k$ grows larger, more inequalities arise from
Theorem~\ref{t:cornerOpt}.  We conjecture that when $\beta_A<0$ for
all $A$, $|A|\le d$, as $k$ grows, a finite number of them suffices to
characterize the region of optimality of~$w^*_{k,d}$.
\begin{conj}
Fix $d$ and assume all parameters have negative values: $\beta_A < 0$,
$|A|\le d$.  There exists a constant $c(d)$ such that in
Theorem~\ref{t:cornerOpt} the inequalities corresponding to $C$ with
$|C|>c(d)$ are redundant given the remaining ones.  In particular,
$c(2) = 5$.
\end{conj}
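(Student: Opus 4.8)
The plan is to split the statement into its two halves: the \emph{existence} of a finite cutoff $c(d)$, which I will prove outright, and the \emph{sharp} value $c(2)=5$, which I will reduce to a finite computation. Throughout set $\mu_\emptyset=1$ as in Remark~\ref{r:betaempty}, write $P_C=\prod_{A\subseteq C,\,|A|\le d}\mu_A$, and abbreviate the left side of \eqref{eq:cornerOpt} as $L(C)=\sum_{B\subseteq C,\,|B|\le d}\binom{|C|-|B|-1}{d-|B|}^2\,P_C/\mu_B$, noting that $P_C/\mu_B=\prod_{A\subseteq C,\,|A|\le d,\,A\ne B}\mu_A$. Since $L(C)$ is a positive combination of exponentials of linear forms in $\beta$, each constraint $L(C)\le1$ is convex on the orthant $\{\beta_A<0\}$, i.e. $\{0<\mu_A<1\}$. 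Two reductions are immediate. First, the constraint for a given $C$ involves only the variables $\mu_A$ with $A\subseteq C$, so proving it redundant uses only the constraints $L(D)\le1$ with $D\subseteq C$. Second, by the $S_k$-symmetry of Proposition~\ref{p:symmetric} we may take $C=\{1,\dots,n\}$. Hence, for each fixed $n$, the redundancy assertion is a semialgebraic implication in the finitely many variables $\{\mu_A:A\subseteq\{1,\dots,n\}\}$, independent of $k$, and therefore decidable.

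The engine for existence is a uniform geometric decay of $P_C$ coming from the smallest inequalities. For $|D|=d+1$ all binomial coefficients in \eqref{eq:cornerOpt} equal $1$, so $L(D)=\sum_B P_D/\mu_B\le1$ is a sum of $N:=2^{d+1}-1$ positive terms whose product equals $P_D^{N-1}$; the arithmetic--geometric mean inequality then yields $P_D\le\rho_d$ with $\rho_d:=(2^{d+1}-1)^{-(2^{d+1}-1)/(2^{d+1}-2)}<1$. Now choose $q=\lfloor n/(d+1)\rfloor$ disjoint $(d+1)$-subsets $D_1,\dots,D_q\subseteq C$. Every nonempty $A\subseteq C$ with $|A|\le d$ lying inside a single $D_r$ is counted exactly once across the $P_{D_r}$, while the remaining (``cross'') factors of $P_C$ lie in $(0,1)$, so $P_C\le\prod_r P_{D_r}\le\rho_d^{\,q}$; a one-line case check gives likewise $P_C/\mu_B\le\rho_d^{\,q-1}$ for every $B$. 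Writing $S_d(n):=\sum_{i=0}^d\binom ni\binom{n-i-1}{d-i}^2=O(n^{2d})$ for the sum of the coefficients in \eqref{eq:cornerOpt}, this proves
\[
L(C)\ \le\ S_d(n)\,\rho_d^{\,\lfloor n/(d+1)\rfloor-1}\ \xrightarrow[n\to\infty]{}\ 0.
\]
Thus there is a threshold $N_d$, obtained from the inequalities of size $d+1$ alone, beyond which $L(C)\le1$ automatically, establishing the existence of $c(d)$.

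To pin down $c(2)=5$ I would sharpen and then finish by computation. The crude base $\rho_2=7^{-7/6}\approx0.10$ and the prefactor $S_2(n)\sim n^4/4$ only push the threshold $N_2$ into the teens, so the estimate proves redundancy of $|C|\ge6$ only beyond $N_2$. For the finitely many residual sizes $6\le n\le N_2$ I would invoke the reduction of the first paragraph: with $C=\{1,\dots,n\}$ fixed, the implication ``$L(D)\le1$ for all $D\subseteq C$ with $|D|\le5$, together with $0<\mu_A<1$, forces $L(C)\le1$'' is a bounded-variable statement of real algebra, which I would discharge by quantifier elimination or, more efficiently, by producing an explicit Positivstellensatz certificate $1-L(C)=\sigma_0+\sum_{|D|\le5}\sigma_D\,(1-L(D))+\sum_A\tau_A\,\mu_A(1-\mu_A)$ with sum-of-squares multipliers $\sigma_\bullet,\tau_\bullet$ found by a semidefinite feasibility computation. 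That $5$ cannot be lowered is exactly the non-redundancy of the $|C|=5$ inequality, witnessed by the explicit parameter point of Example~\ref{e:notRedundant} (the inset of Figure~\ref{fig:stExample}).

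The main obstacle is that the block estimate, while it cleanly yields existence, is far from tight: its base $\rho_d$ ignores all inequalities of size between $d+2$ and $c(d)$, so the residual range $[\,d+2,N_d\,]$ to be checked by hand is large and the associated semidefinite problems grow quickly with $n$. Closing the gap to the sharp value therefore hinges on replacing the block bound by a genuinely convex domination of $L(C)$ by the constraints of size $\le c(d)$ --- for instance a log-convexity or telescoping estimate that feeds the stronger small-size inequalities, not merely those of size $d+1$, into an effective base $\rho_d'\ll\rho_d$. Finding such a domination, and with it a closed form for $c(d)$, is the crux; the existence half and the reduction to a finite, $k$-independent computation are, by contrast, essentially the argument above.
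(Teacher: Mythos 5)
The paper offers no proof of this statement --- it is posed as an open conjecture, supported only by the computations in Example~\ref{e:notRedundant} and the plot in Figure~\ref{fig:stExample} --- so there is no in-house argument to compare against, and your proposal must be judged on its own merits. Its existence half checks out and, as far as I can tell, genuinely settles the qualitative part of the conjecture. The three steps are all sound: (i) for $|D|=d+1$ every binomial coefficient in \eqref{eq:cornerOpt} is $\binom{d-|B|}{d-|B|}=1$ (consistent with the remark following the proof of Theorem~\ref{t:cornerOpt}), the $N=2^{d+1}-1$ terms $P_D/\mu_B$ multiply to exactly $P_D^{N}/P_D=P_D^{N-1}$ because the index set of $B$ coincides with the index set defining $P_D$, and AM--GM then gives $P_D\le N^{-N/(N-1)}=\rho_d<1$; (ii) for disjoint blocks $D_1,\dots,D_q\subseteq C$, each nonempty $A\le d$ inside a block occurs in exactly one $P_{D_r}$, and discarding the cross factors only increases the product precisely because $\mu_A<1$ on the open negative orthant --- this is the one place the hypothesis $\beta_A<0$ enters, and it enters essentially, matching the conjecture's restriction (the caption of Figure~\ref{fig:stExample} indicates redundancy fails for antagonistic parameters, where your argument correctly does not apply); the case analysis for $P_C/\mu_B\le\rho_d^{\,q-1}$ (including $B=\emptyset$ via $\mu_\emptyset=1$) is as routine as you claim; (iii) $S_d(n)=O(n^{2d})$ against geometric decay in $\lfloor n/(d+1)\rfloor$ forces $L(C)\le1$ for all $n$ past an explicit threshold $N_d$, so $c(d)\le N_d$ works, the implying inequalities being of size $d+1\le c(d)$ and hence among the retained ones.

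The genuine gap is the sharp value $c(2)=5$, in both directions, and you should be explicit that your proposal does not prove it. For the upper bound, the finite residual check over $6\le|C|\le N_2$ (your own estimate puts $N_2$ near $18$) is proposed but never performed; a case $|C|=n$ involves $n+\binom{n}{2}$ variables, quantifier elimination is hopeless at that size, and while adding the multipliers $\tau_A\,\mu_A(1-\mu_A)$ does make the domain compact so that a Positivstellensatz certificate exists if the implication is true, its degree is not bounded a priori, so the semidefinite search is open-ended rather than a decision procedure at fixed degree. For the lower bound, your citation is off: the explicit point $s=5/9$, $t=4/5$ of Example~\ref{e:notRedundant} certifies only that the $|C|=4$ inequality is not implied by the $|C|=3$ ones, i.e.\ $c(2)\ge4$; the necessity of $|C|=5$ rests solely on the numerically plotted inset of Figure~\ref{fig:stExample}, so you would still need to extract and rigorously verify a point (the symmetric slice $\mu_i=s$, $\mu_{ij}=t$ suffices, by symmetry, for every $k$) satisfying all $|C|\in\{3,4\}$ inequalities and violating a $|C|=5$ one. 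A smaller point: your claim that redundancy of the $C$-inequality can be tested using only the constraints $L(D)\le1$ with $D\subseteq C$ is valid in the direction you actually use (fewer hypotheses yields a stronger implication), but the converse --- needed for your assertion that the finite computation is \emph{equivalent} to the conjecture --- requires an extension argument (e.g.\ sending $\beta_A\to-\infty$ for all $A\not\subseteq C$ strictly satisfies every constraint involving outside variables), which is true but absent from your write-up. In sum: the existence of $c(d)$ is proved, which goes beyond what the paper establishes; the assertion $c(2)=5$ remains open under your proposal.
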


\begin{remark}\label{r:inParameters}
The inequalities \eqref{eq:cornerOpt} are restrictions on the
parameters.  For $d=1$ it happens that they can be rewritten as
inequalities in the intensities $\lambda (\xx,\beta)$, but in general
this is not the case.  In principle a semi-algebraic description in
parameter space can computed.  With $\phi$ the parametrization mapping
coordinates $\mu$ to intensities $\lambda$, consider the set
$\left\{ (\mu,\lambda) : \lambda=\phi(\mu), \mu \text{ satisfies
  \eqref{eq:cornerOpt}}\right\}$.
According to the Tarski--Seidenberg theorem the projection of this
semi-algebraic set to the $\lambda$ coordinates is again
semi-algebraic.  Actual computation, however, relies on quantifier
elimination.  Therefore even the best algorithms are for now unable to
solve simple examples.  See \cite{basu2006algorithms} for the theory
of such computations.
\end{remark}

We finish the discussion with a question regarding other saturated
designs.
\begin{question}\label{q:cornerOnly}
When $\beta_A < 0$, for all $A$, $|A|\le d$, is the corner design the
only saturated design that admits $D$-optimal parameter values?
\end{question}
The Kiefer-Wolfowitz theorem gives a system of inequalities for any
saturated design and this system characterizes parameter values for
optimality.  In the case $d=1, k=3$ Graßhoff et al. have shown that,
up to fractional factorial designs at $\beta=0$, only the corner
design yields a feasible system~\cite{grasshoff2015stochastic}.  We
have used numerical moment relaxations and semi-definite programming
to numerically confirm the case $d=1,k=4$.  Everything beyond this is
out of reach at the moment.

\subsection{Proof of Theorem~\ref{t:cornerOpt}}
\label{sec:proof-theorem}
The Kiefer-Wolfowitz theorem characterizes regions of optimality of a
fixed saturated design $w$ by means of inequalities in
parameters~$\mu_A$ (or equivalently $\beta_A$).  We apply it to the
corner design and make these inequalities explicit.  To do so, a
0/1-matrix needs to be inverted.
\begin{defn}
For fixed $k,d$, the \emph{model matrix} $F_{k,d}$ is the matrix whose
rows are the regression
vectors~$\{f(\xx) : \xx\in\supp (w^*_{k,d})\}$.
\end{defn}
The rows and columns of $F_{k,d}$ may be indexed by subsets of
$A\subseteq \{1,\dots,k\}$ with $|A|\leq d$ so that $F_{k,d}$ is lower
triangular.  We ommit the subscript indices if $k,d$ are fixed or
clear from the context.

\begin{example}
For $k=3$ and $d=2$ the model matrix is
\[
F_{3,2}=
\bordermatrix{
& 1& x_1 & x_2 &x_3 & x_1x_2 &x_1x_3 & x_2x_3 \cr
1 &1& 0 & 0 &0 & 0&0&0 \cr
x_1 &1& 1 & 0 &0 & 0&0&0 \cr
x_2 &1& 0 & 1 &0 & 0&0&0 \cr
x_3 &1& 0 & 0 &1 & 0&0&0 \cr
x_1x_2 &1& 1 & 1 &0 & 1&0&0 \cr
x_1x_3 &1& 1 & 0 &1 & 0&1&0 \cr
x_2x_3 &1& 0 & 1 &1 & 0&0&1 \cr }.
\]
\end{example}

In the general setup of $k$ rules and interaction order $d$ the
entries $F_{A,B}$ of $F$ are
\[
F_{A,B} =
\begin{cases}
1 & \text{ if } B\subseteq A \\
0 & \text{ otherwise,}
\end{cases} \qquad \text{where }\, A,B \subset \{1,\dots,k\}, |A|\le d, |B|\le d.
\]

\begin{lemma}\label{l:invertF}
The inverse of $F$ has entries
\[
F^{-1}_{A,B} =
\begin{cases}
(-1)^{|A|-|B|} & \text{ if } B \subseteq A \\
0 & \text{ otherwise.} 
\end{cases}
\]
\end{lemma}
\begin{proof}
Matrix multiplication yields
\begin{equation*}
  (F\cdot F^{-1})_{A,B}
  = \sum_{\substack{C\subset \{1,\dots,k\}\\|C| \le d}} F_{A,C}F^{-1}_{C,B} =
  \sum_{\substack{C\subset\{1,\dots,k\}\\|C|\le d}} (-1)^{|A|-|B|}
  \ind_{B\subset C}\ind_{C\subset A}.
\end{equation*}
If $A = B$, then there is only one summand, so that
$(F\cdot F^{-1})_{A,A} = 1$.  If not, then since both matrices are
lower triangular, consider the case $B \subsetneq A$ and let
$a\in A\setminus B$.  There is a bijection which identifies a set $C$
not containing $a$ with $C \cup \{a\}$.  This bijection matches
summands with opposite signs and consequently
$(F\cdot F^{-1})_{A,B} = 0$.
\end{proof}

If $|\xx| \le d$, then there is a row with index $B$ in $F$ that may
be identified with~$\xx$ via $F(\xx)_B=(1,\xx, \ldots)$.  For this
$\xx$ we have
\[
\left(F^{-T}f(\xx) \right)_A = (e_{\xx})_A := \begin{cases}
1 & A = B \\
0 & \text{otherwise}.
\end{cases}
\]

This is a special case of the following lemma.
\begin{lemma}\label{l:multiply}
Let $\xx \in \{0,1\}^k$ then
\[
(F^{-T}f(\xx))_A =
\begin{cases}
(-1)^{d-|A|}\binom{|A(\xx)|-|A|-1}{d-|A|} & \text{if } A \subset A(\xx) \\
0 & \text{otherwise}.
\end{cases}
\]
\end{lemma}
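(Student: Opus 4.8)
The plan is to compute the entry $(F^{-T}f(\xx))_A$ directly as a sum and then collapse it with a binomial identity. First I would write out the matrix--vector product in terms of the transpose: since $(F^{-T})_{A,C} = (F^{-1})_{C,A}$, Lemma~\ref{l:invertF} gives $(F^{-T})_{A,C} = (-1)^{|C|-|A|}\ind_{A\subseteq C}$. The components of the regression vector are $f(\xx)_C = \prod_{i\in C} x_i = \ind_{C\subseteq A(\xx)}$, because a squarefree monomial indexed by $C$ evaluates to $1$ exactly when every rule in $C$ is active in $\xx$. Combining these,
\[
(F^{-T}f(\xx))_A = \sum_{\substack{C\subset\{1,\dots,k\}\\|C|\le d}} (-1)^{|C|-|A|}\,\ind_{A\subseteq C}\,\ind_{C\subseteq A(\xx)},
\]
so that the summation runs over precisely those $C$ with $A\subseteq C\subseteq A(\xx)$ and $|C|\le d$.

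Next I would dispose of the support condition. If $A\not\subseteq A(\xx)$ there is no admissible $C$ and the entry vanishes, which is the second case of the claim. Assuming $A\subseteq A(\xx)$, every admissible $C$ is determined by its intersection with $A(\xx)\setminus A$, a set of size $|A(\xx)|-|A|$. Writing $j = |C\setminus A|$, there are $\binom{|A(\xx)|-|A|}{j}$ choices of $C$ with $|C| = |A|+j$, the degree bound forces $0\le j\le d-|A|$ (note $|A|\le d$ since $A$ is a row index), and the sign is $(-1)^j$. Hence
\[
(F^{-T}f(\xx))_A = \sum_{j=0}^{d-|A|} (-1)^j \binom{|A(\xx)|-|A|}{j}.
\]

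The final step is the identity $\sum_{j=0}^{m}(-1)^j\binom{N}{j} = (-1)^m\binom{N-1}{m}$, which I would prove using Pascal's rule $\binom{N}{j}=\binom{N-1}{j}+\binom{N-1}{j-1}$ and telescoping (or by a short induction on $m$). Applying it with $N = |A(\xx)|-|A|$ and $m = d-|A|$ produces exactly $(-1)^{d-|A|}\binom{|A(\xx)|-|A|-1}{d-|A|}$, matching the first case. The only boundary to watch is $A = A(\xx)$, where $N=0$: there the truncated sum is simply $1$, consistent with the convention $\binom{-1}{m}=(-1)^m$ under which the closed form also evaluates to $1$.

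I expect the main (and essentially only) conceptual point to be recognizing that the degree truncation $|C|\le d$ is what turns an otherwise complete alternating sum—which would vanish—into the \emph{partial} alternating sum above, whose value is the nonzero binomial coefficient in the statement. Everything else is indexing and bookkeeping, so the proof should be short.
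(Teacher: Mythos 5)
Your proof is correct and follows essentially the same route as the paper's: expand the matrix--vector product using Lemma~\ref{l:invertF}, observe the sum runs over $A\subseteq C\subseteq A(\xx)$ with $|C|\le d$, reindex by the part of $C$ lying in $A(\xx)\setminus A$, and finish with the truncated alternating binomial sum identity $\sum_{j=0}^{m}(-1)^j\binom{N}{j}=(-1)^m\binom{N-1}{m}$. Your explicit treatment of the boundary case $A=A(\xx)$ (via the convention $\binom{-1}{m}=(-1)^m$) is a small point of care beyond what the paper writes out, but the argument is the same.
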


\begin{proof}
We compute
\[
(F^{-T} f(\xx))_A = 
\sum_{|B|\le d} (-1)^{|A|+|B|} \ind_{A\subset B}\ind_{B\subset A(\xx)}.
\]
If $A\not\subset A(\xx)$ then all summands are zero.  Therefore we can
relabel the summands by sets $B'$ disjoint from $A$ such that
$B = A \cup B'$.  This yields
\[
(F^{-T} f(\xx))_A = 
  \sum_{\substack{B'\subset A(\xx)\setminus A\\|B'|\leq d-|A|}}
  (-1)^{|A| + |B'| + |A|}.
\]
The result now follows from the following known formula (which is also
easy to prove by induction)
\[
\sum_{k=0}^K (-1)^k \binom{n}{k} = (-1)^K \binom{n-1}{K}. \qedhere
\]
\end{proof}

\begin{proof}[Proof of Theorem~\ref{t:cornerOpt}]
 
By the Kiefer-Wolfowitz theorem, a saturated design is optimal if and
only if the following inequality holds for all settings
$\xx \in \{0,1\}^k$
\[
\lambda(\xx) (F^{-T}f(\xx))^T \Psi^{-1} (F^{-T}f(\xx)) \le 1,
\]
where $\Psi=\diag(1, (\mu_A)_{|A|\leq d})$.
The inequalities corresponding to $\xx$ with $|\xx|\le d$ are
automatically satisfied with equality:
\begin{equation*}
\lambda(\xx) (F^{-T}f(\xx))^T \Psi^{-1} (F^{-T}f(\xx))
=\lambda(\xx) e_{\xx}^T \Psi^{-1} e_{\xx}
=\prod_{\substack{A \subset \xx,\\ |A|\leq d}} \mu_A 
  \prod_{\substack{A \subset \xx,\\ |A|\leq d}} \mu_A^{-1}
= 1.
\end{equation*}
When $|\xx|\ge d+1$, using Lemma~\ref{l:multiply}, we get the
inequalities
\begin{equation*}
\sum_{\substack{B\subset A(\xx) \\ |B| \le d}} \binom{|A(\xx)|-|B| -
1}{d - |B|}^2 \prod_{\substack{A\subset A(\xx), |A| \le d \\A\neq B}}
\mu_A \leq 1. \qedhere
\end{equation*}
\end{proof}

\begin{remark}
In the proof of Theorem~\ref{t:cornerOpt}, when $|\xx| = d+1$, by
Lemma~\ref{l:multiply}, the entries $(F^{-T}f(\xx))_A$ are zero if
$A\not\subset A(\xx)$ and equal to $\pm 1$ if $A\subset A(\xx)$, since
in this case the binomial coefficient is~$\binom{d-|A|}{d-|A|}$.  We
then find inequalities of the form
\begin{equation*}
\sum_{\substack{B\subset A(\xx)\\ |B| \le d}} \prod_{\substack{A\subset
A(\xx), |A| \le d \\A\neq B}} \mu_A \leq 1.
\end{equation*}
\end{remark}

\section{A geometric perspective on \texorpdfstring{$D$}{D}-optimal designs}
\label{s:algGeo}
For each $\xx$, the matrix $f(\xx)f(\xx)^T$ is a positive-semidefinite
rank one matrix with entries zero and one.  They are the vertices of
the optimization domain which turns out to be a polytope:
\begin{defn}\label{d:polytope} The
\emph{information matrix polytope} is
\[
P(\beta) = \conv \left\{\lambda(\xx,\beta) f(\xx)f(\xx)^T :
\xx\in\{0,1\}^k \right\}.
\]
\end{defn}
All points of which the convex hull is taken are also vertices
of~$P(\beta)$, since any affine combination of them has rank at least
two.  Each point in $P(\beta)$ is an information matrix $M(w,\beta)$
for some approximate design~$w$.  In the case $\beta=0$ (which implies
$\lambda(\xx) = 1$ for all~$\xx$), the arising polytopes are well-known
in the combinatorial optimization literature.
\begin{example}\label{e:correlationMarginal}
When $d=1$ and $\beta=0$, $P(\beta)$ is the correlation polytope.  To
make this obvious, one needs to omit the constant entry $1$ from the
beginning of the regression function~$f$.  The correlation polytope is
well-known in combinatorial optimization and its complexity provides
lower complexity bounds there~\cite{kaibel2013short}.  It is affinely
equivalent to the even better known cut polytope via the covariance
mapping~\cite[Chapter~5]{dezalaurent97}.  For higher $d$, and
$\beta=0$, the polytope $P(\beta)$ is called an \emph{inclusion
polytope} in~\cite[Section~2.4.1]{kahle10:_bound_statis_model}.  It is
affinely equivalent (via a generalization of the covariance mapping)
to the marginal polytope of a corresponding hierararchical model.
\end{example}

The problem of determining an optimal experimental design has two
steps
\begin{enumerate}
\item\label{it:optimize} Determine an optimal information matrix~$M^*$.
\item\label{it:decompose} Determine weights $w$ that write the optimal
matrix $M^*$ as a convex combination of
vertices~$\lambda(\xx,\beta) f(\xx)f(\xx)^T$ of the information matrix
polytope.
\end{enumerate}
The possible solutions to the second problem are dealt with using
convex geometry.  In particular Carath\'eodory's theorem applies and
gives bounds for support sizes of weight vectors~$w$.

In the case of $D$-optimality, the optimization problem in
step~\ref{it:optimize} is to maximize the determinant over~$P$.  The
determinant vanishes at the vertices of $P$, and since it is a
log-concave function, a unique maximum with positive value is attained
in the interior, as soon as there are full rank matrices in the
interior.  All matrices in the information matrix polytope are
positive semidefinite.  This motivates the \emph{linear matrix
inequality (LMI) relaxation of $P(\beta)$}.  For this, the
optimization domain $P(\beta)$ is replaced by the \emph{spectrahedron}
arising as the intersection of the cone of positive semidefinite
matrices with the affine space spanned by~$P(\beta)$.

Maximization of the determinant over a spectrahedron is a well-known
convex optimization problem~\cite{vandenberghe1996semidefinite}.  The
unique point where the determinant is maximal is known as the
\emph{analytic center} of the semidefinite program.  If the analytic
center of the linear matrix inequality lies inside $P(\beta)$, then it
gives the optimal experimental design.  It is therefore an interesting
problem to give a fully geometric description of the case that the
analytic center lies outside of~$P$.

\begin{question}
For fixed $k,d$, as a function of $\beta$, what is the difference
between $P(\beta)$ and its LMI relaxation?  Through which faces can
the analytic center leave $P(\beta)$ when $\beta$ changes?
\end{question}

\begin{example}\label{e:lmi}
Let $k=2$ and $d=1$.  Setting again $\beta_\emptyset = 0$, the two
parameters of the Rasch Poisson counts model are
$\lambda_i = e^{\beta_i}$, $i=1,2$.  By symmetry considerations from
Section~\ref{s:symmetry} we restrict ourselves to $\beta_i \le 0$,
which corresponds to $\lambda_i \in (0,1]$.  The information matrix
polytope is
\[
P = \conv \left\{
  \begin{pmatrix}
  1 & 0 & 0 \\
  0 & 0 & 0 \\
  0 & 0 & 0 
  \end{pmatrix},
  \begin{pmatrix}
  \lambda_1 & \lambda_1 & 0 \\
  \lambda_1 & \lambda_1 & 0 \\
  0 & 0 & 0 
  \end{pmatrix},
  \begin{pmatrix}
  \lambda_2 & 0 & \lambda_2 \\
  0 & 0 & 0 \\
  \lambda_2 & 0 & \lambda_2 
  \end{pmatrix},
  \begin{pmatrix}
  \lambda_1\lambda_2 & \lambda_1\lambda_2 & \lambda_1\lambda_2 \\
  \lambda_1\lambda_2 & \lambda_1\lambda_2 & \lambda_1\lambda_2 \\
  \lambda_1\lambda_2 & \lambda_1\lambda_2 & \lambda_1\lambda_2 
  \end{pmatrix}
\right\}.
\]
Independent of the values $\lambda_1,\lambda_2$, the polytope $P$ is a
3-dimensional simplex.  Its LMI relaxation is the intersection of the
cone of $(3\times 3)$ positive-semidefinite matrices with the affine
space spanned by~$P$.  This yields the following linear matrix
inequality ($\succeq 0$ means positive-semidefinite), using the first
vertex as the base point and variables~$x,y,z$: \small
\[
\left\{
(x,y,z) : 
      \begin{pmatrix}
      1 + x (\lambda_1-1) + y (\lambda_2-1) + z(\lambda_1\lambda_2 -1)
      & \lambda_1x + \lambda_1\lambda_2z  
      & \lambda_2y + \lambda_1\lambda_2z \\
      \lambda_1x + \lambda_1\lambda_2 z & \lambda_1x + \lambda_1\lambda_2z &
      \lambda_1\lambda_2z \\
      \lambda_2y + \lambda_1\lambda_2 z & \lambda_1\lambda_2z & \lambda_2y +
      \lambda_1\lambda_2z
      \end{pmatrix} 
      \succeq 0
\right\}.
\]
\normalsize Figure~\ref{f:surferSpectra} contains plots of the
resulting spectrahedra ``along the diagonal''
$\lambda := \lambda_1 = \lambda_2$.
\begin{figure}[htp]
\centering
\includegraphics[width=.32\textwidth]{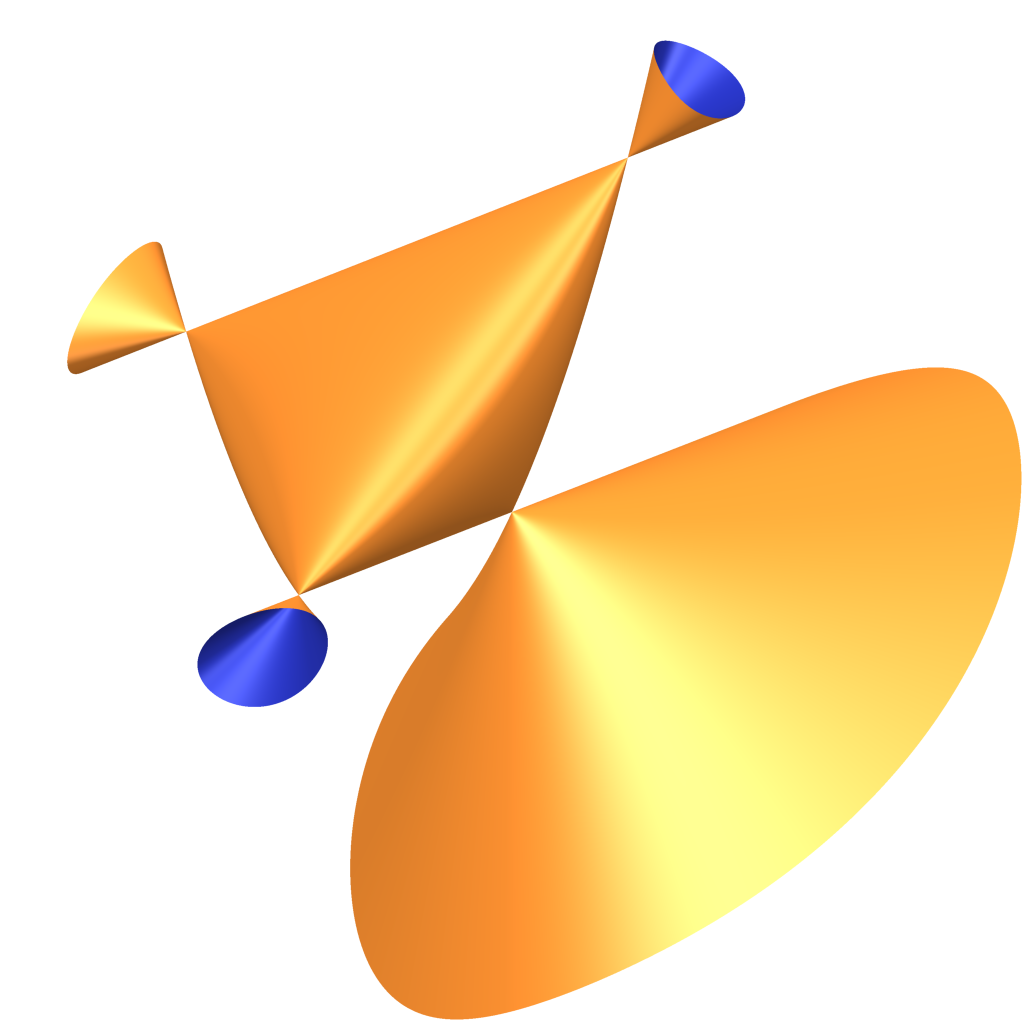}
\includegraphics[width=.32\textwidth]{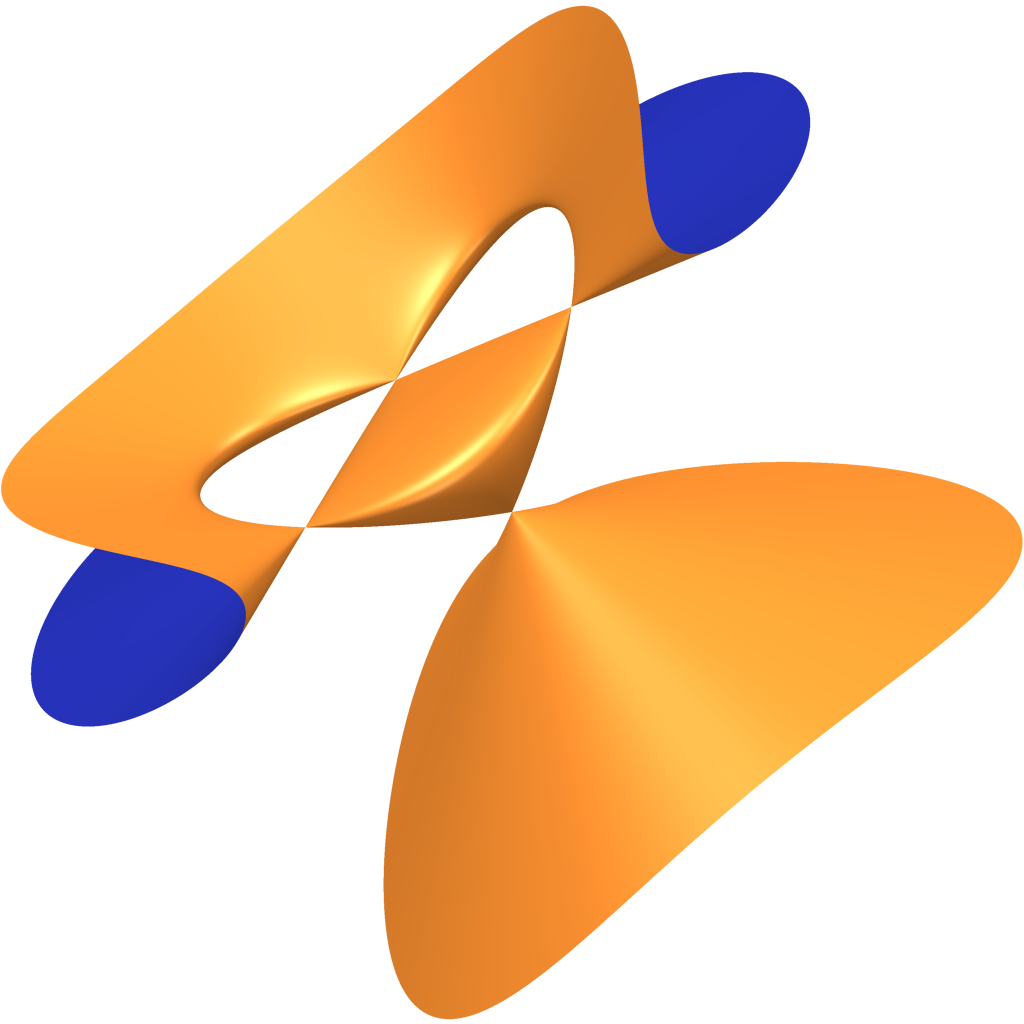}
\includegraphics[width=.32\textwidth]{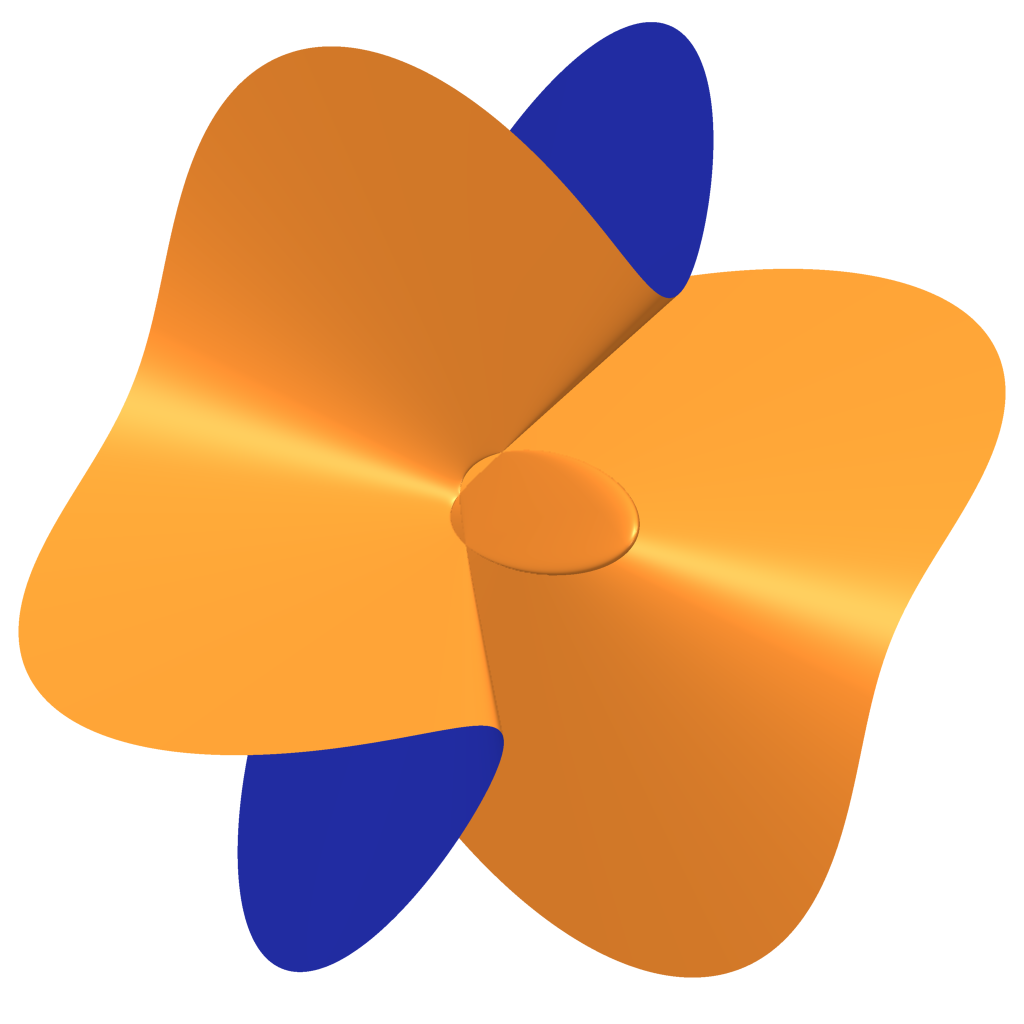}
\caption{Vanishing surfaces of the determinant in Example~\ref{e:lmi}.
In each plot, the bounded region is the spectrahedron.  As the
parameter moves from $\lambda=1$ (left) through $\lambda=0.45$
(middle) to $\lambda=0.2$ (right) it elongates. The ear-shaped cones
emerging from the vertices do not touch in the left-most picture.  As
soon as $\lambda<1$, they do touch: Even in the middle picture, the
cone going off to the bottom and the sheet emerging from the three
remaining vertices are connected in codimension one (outside of the
pictured area).  If $\lambda_1 = 1$, but
$\lambda_2 < 1$, then exactly three of the four vertex cones meet
eventually. \label{f:surferSpectra}}
\end{figure}
Each of the three spectrahedra has four vertices, although this is
hardly visible in the rightmost picture.  These are also the vertices
of~$P$.  In fact, when $\lambda$ is close to 1, the spectrahedron
looks like a bloated version of~$P$.
The analytic center of the LMI is the point $(x,y,z)$ where the
determinant is maximal.  Numerical approximations can be computed
efficiently with semidefinite optimization (we used
\yalmip\cite{YALMIP} in \matlab).  Some values are given in
Table~\ref{tab:values}.
\begin{table}[htp]
\centering
\begin{tabular}{|c|c|}
\hline 
$\lambda$ & analytic center \\
\hline
1 &   $ ( 0.250, 0.250, 0.250 )$  \\
0.8 & $ ( 0.254, 0.254, 0.217 )$  \\
0.5 & $ ( 0.300, 0.300, 0.094 )$  \\
$\sqrt{2} - 1$ & $(0.333, 0.333, 0.000)$ \\
0.4 & $ ( 0.343, 0.343, -0.023 )$  \\
0.2 & $ ( 1.580, 1.580, -2.976 )$ \\
\hline
\end{tabular}
\caption{Coordinates of the analytic center as a function of
$\lambda$.}
\label{tab:values}
\end{table}
Interestingly, the \mosek solver that we used declares the
spectrahedron as unbounded for parameter values $\lambda < 0.171$.
The transition of the $D$-optimal design to a saturated design at
$\sqrt{2}-1$ found in \cite{grasshoff2013optimal} is visible here as
the analytic center leaves the polytope~$P$ at that parameter value.
In this sense, the optimality of certain designs can be understood in
terms of the geometry of deforming spectrahedra.
\end{example}

We close by mentioning another connection between polyhedral and
spectrahedral geometry.  The \emph{elliptope} is the spectrahedron
consisting of all positive semi-definite matrices with entries one on
the diagonal (so-called correlation matrices).  It is a well-known
relaxation of the correlation polytope and its polyhedral faces have
received considerable attention (see
\cite{laurent1995positive,laurent1996facial}).  Example~\ref{e:lmi}
motivates the study of the deformation of the linear matrix
inequalities arising from affine hulls of information polytopes.  Each
such deformation starts at an elliptope when $\beta = 0$.  As $\beta$
becomes more negative, the spectrahedron deforms and eventually its
analytic center leaves the information matrix polytope.  A thorough
understanding of this phenomenon would probably yield new insights
about optimality of experimental designs, in particular
Question~\ref{q:cornerOnly}.

\bibliographystyle{amsplain}
\bibliography{poisson}

\providecommand{\bysame}{\leavevmode\hbox to3em{\hrulefill}\thinspace}
\providecommand{\MR}{\relax\ifhmode\unskip\space\fi MR }
% \MRhref is called by the amsart/book/proc definition of \MR.
\providecommand{\MRhref}[2]{%
  \href{http://www.ams.org/mathscinet-getitem?mr=#1}{#2}
}
\providecommand{\href}[2]{#2}
\begin{thebibliography}{10}

\bibitem{4ti2}
{\relax 4ti2}~{\relax }team, \emph{4ti2---{A} software package for algebraic,
  geometric and combinatorial problems on linear spaces}, {a}vailable at
  www.4ti2.de, 2007.

\bibitem{aoki2012markov}
Satoshi Aoki, Hisayuki Hara, and Akimichi Takemura, \emph{Markov bases in
  algebraic statistics}, vol. 199, Springer Science \& Business Media, 2012.

\bibitem{basu2006algorithms}
Saugata Basu, Richard~D Pollack, and Marie-Fran{\c{c}}oise Roy,
  \emph{Algorithms in real algebraic geometry}, vol.~10, Springer, 2006.

\bibitem{dezalaurent97}
M.M. Deza and Monique Laurent, \emph{Geometry of cuts and metrics}, Algorithms
  and Combinatorics, Springer, Berlin, 1997.

\bibitem{diaconissturmfels98}
Persi Diaconis and Bernd Sturmfels, \emph{Algebraic algorithms for sampling
  from conditional distributions}, Annals of Statistics \textbf{26} (1998),
  363--397.

\bibitem{doebler2015processing}
Anna Doebler and Heinz Holling, \emph{A processing speed test based on
  rule-based item generation: An analysis with the {R}asch {P}oisson counts
  model}, Learning and Individual Differences (2015).

\bibitem{drton09:_lectur_algeb_statis}
Mathias Drton, Bernd Sturmfels, and Seth Sullivant, \emph{Lectures on algebraic
  statistics}, Oberwolfach Seminars, vol.~39, Springer, Berlin, 2009, A
  Birkh\"{a}user book.

\bibitem{grasshoff2013optimal}
Ulrike Gra{\ss}hoff, Heinz Holling, and Rainer Schwabe, \emph{Optimal design
  for count data with binary predictors in item response theory}, Advances in
  Model-Oriented Design and Analysis, Springer, 2013, pp.~117--124.

\bibitem{grasshoff2014optimal}
\bysame, \emph{Optimal design for the {R}asch {P}oisson counts model with
  multiple binary predictors}, Tech. report, 2014.

\bibitem{grasshoff2015stochastic}
\bysame, \emph{{P}oisson model with three binary predictors: When are saturated
  designs optimal?}, Stochastic Models, Statistics and Their Applications
  (Ansgar Steland, Ewaryst Rafajłowicz, and Krzysztof Szajowski, eds.), 2015,
  pp.~75--81.

\bibitem{kahle10:_bound_statis_model}
Thomas Kahle, \emph{On boundaries of statistical models}, Ph.D. thesis, Leipzig
  University, 2010.

\bibitem{KahOlbJosAy08}
Thomas Kahle, Eckehard Olbrich, J{\"{u}}rgen Jost, and Nihat Ay,
  \emph{Complexity measures from interaction structures}, Physical Review E
  \textbf{79} (2009), 026201.

\bibitem{kaibel2013short}
Volker Kaibel and Stefan Weltge, \emph{A short proof that the extension
  complexity of the correlation polytope grows exponentially}, Discrete \&
  Computational Geometry \textbf{53} (2013), no.~2, 397--401.

\bibitem{laurent1995positive}
Monique Laurent and Svatopluk Poljak, \emph{On a positive semidefinite
  relaxation of the cut polytope}, Linear Algebra and its Applications
  \textbf{223} (1995), 439--461.

\bibitem{laurent1996facial}
\bysame, \emph{On the facial structure of the set of correlation matrices},
  SIAM Journal on Matrix Analysis and Applications \textbf{17} (1996), no.~3,
  530--547.

\bibitem{YALMIP}
J.~L\"{o}fberg, \emph{{YALMIP}: A toolbox for modeling and optimization in
  {MATLAB}}, Proceedings of the CACSD Conference (Taipei, Taiwan), 2004.

\bibitem{pukelsheim1993optimal}
Friedrich Pukelsheim, \emph{Optimal design of experiments}, Classics in Applied
  Mathematics, vol.~50, SIAM, 2006.

\bibitem{radloff15:_invar}
Martin Radloff and Rainer Schwabe, \emph{Invariance and equivariance in
  experimental design for nonlinear models}, preprint (2015).

\bibitem{sturmfels96:_gr_obner_bases_and_convex_polyt}
Bernd Sturmfels, \emph{{G}r\"{o}bner bases and convex polytopes}, University
  Lecture Series, vol.~8, American Mathematical Society, Providence, RI, 1996.

\bibitem{vandenberghe1996semidefinite}
Lieven Vandenberghe and Stephen Boyd, \emph{Semidefinite programming}, SIAM
  Review \textbf{38} (1996), no.~1, 49--95.

\end{thebibliography}

\end{document}